\newtheorem{thm}{Th\'{e}or\`{e}me}[section]
\newtheorem{lem}[thm]{Lemme}
\theoremstyle{definition}
\theoremstyle{remark}
\newtheorem*{rem}{Remarque}
\newtheorem*{expl}{Exemple}
\numberwithin{equation}{section}
\newcommand{\set}[1]{\left\{#1\right\}}
\newcommand{\Real}{\mathbb{R}}
\begin{document}

\title{Point fixe li\'{e} \`{a} une orbite p\'{e}riodique d'un diff\'{e}omorphisme du plan}
\author{Boris Kolev}%
\date{2 avril 1990}%

\address{CMI, 39, rue F. Joliot-Curie, 13453 Marseille cedex 13, France}%
\email{kolev@cmi.univ-mrs.fr}%

\thanks{Je remercie Jean-Marc Gambaudo pour son aide et ses encouragements pr\'{e}cieux durant la pr\'{e}paration de ce travail.}%

\subjclass{55M20; 57N05; 58C30; 58F20}%
\keywords{Dynamics of surfaces homeomorphisms; Nielsen theory}%

%\commby{Ren\'{e} Thom}%

\begin{abstract}
\'{E}tant donn\'{e} un diff\'{e}omorphisme $C^{1}$ de $\Real^{2}$ qui poss\`{e}de
une orbite p\'{e}riodique, on montre comment la th\'{e}orie du point fixe de
Nielsen peut \^{e}tre utilis\'{e}e pour \'{e}tablir directement l'existence d'un
point fixe li\'{e} \`{a} cette orbite p\'{e}riodique.
\end{abstract}

\maketitle

% ----------------------------------------------------------------

\section{Introduction}

Soient $f$ un hom\'{e}omorphisme de $\Real^{2}$ et
$\mathcal{O}=\set{P_{1}, P_{2}, \dotsc, P_{n}}$ une orbite
p\'{e}riodique de $f$, de p\'{e}riode $n$. Supposons que $f$ poss\`{e}de un
point fixe $P_{0}$. On dit que $P_{0}$ \emph{n'est pas li\'{e}} \`{a}
l'orbite p\'{e}riodique $\mathcal{O}$ s'il existe une courbe de Jordan
$\mathcal{C}$, bordant un disque $D$, telle que :
\begin{enumerate}
  \item $\mathcal{O}\subset \mathrm{Int}(D)$ ,
  \item $P_{0}\in \Real^{2}\setminus D$ ,
  \item $f(\mathcal{C})$ est isotope \`{a} $\mathcal{C}$ dans $\Real^{2}\setminus\{P_{0}, P_{1}, \dotsc, P_{n}\}$.
\end{enumerate}
Dans le cas contraire, on dit que $P_{0}$ est \emph{li\'{e}} \`{a}
$\mathcal{O}$.

\begin{expl}
Le point fixe d'une rotation d'angle $2k\pi/n$ est li\'{e} \`{a} n'importe
quelle orbite p\'{e}riodique de cette rotation.
\end{expl}

\begin{figure}
\begin{center}
    \includegraphics{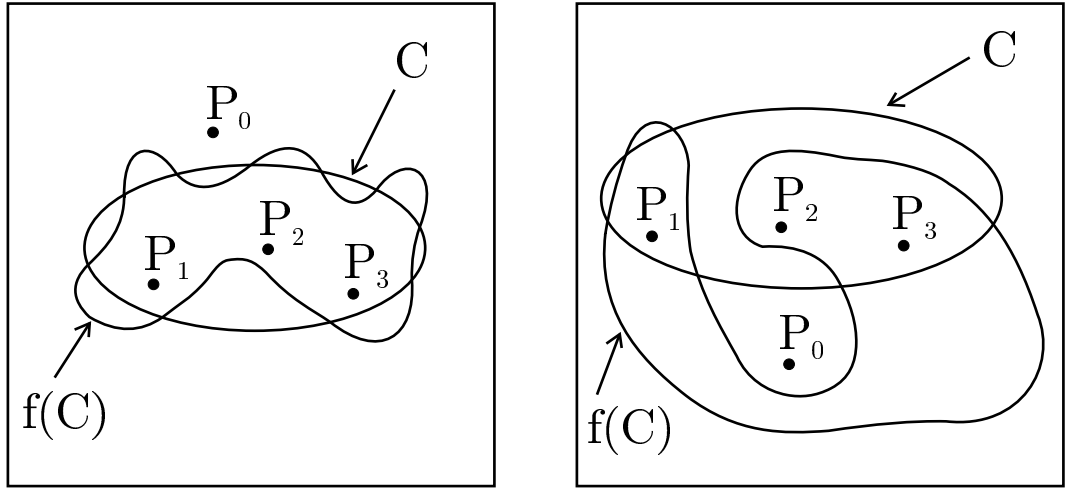}\\
    \caption{A gauche, $\mathcal{C}$ et $f(\mathcal{C})$ sont isotopes dans $\Real^{2}\setminus\{P_{0}, P_{1}, \dotsc, P_{n}\}$, \`{a} droite, elles ne le sont pas.}\label{figure}
\end{center}
\end{figure}

Un des corollaires du th\'{e}or\`{e}me de translation plane de Brouwer
\'{e}nonce que tout hom\'{e}omorphisme de $\Real^{2}$, qui pr\'{e}serve
l'orientation et poss\`{e}de une orbite p\'{e}riodique, a un point fixe
(\cites{Boy89,Fat87}). En utilisant ce r\'{e}sultat et les travaux de
Thurston sur la classification des diff\'{e}omorphismes des surfaces,
J.-M. Gambaudo a montr\'{e}, dans un article r\'{e}cent \cite{Gam90} que
tout diff\'{e}omorphisme $C^{1}$ du disque qui pr\'{e}serve l'orientation et
poss\`{e}de une orbite p\'{e}riodique, poss\`{e}de \'{e}galement un point fixe
\emph{li\'{e}} \`{a} cette orbite p\'{e}riodique.

Il nous est apparu que ce r\'{e}sultat \'{e}tait une cons\'{e}quence directe de
la th\'{e}orie du point fixe de Nielsen. C'est ce que nous proposons de
d\'{e}velopper ici. Plus pr\'{e}cis\'{e}ment, nous montrons le r\'{e}sultat suivant:

\begin{thm}\label{main}
Soit $f:\Real^{2}\rightarrow \Real^{2}$ un diff\'{e}omorphisme $C^{1}$
pr\'{e}servant l'orientation. Si $f$ poss\`{e}de une orbite p\'{e}riodique
$\mathcal{O}$, alors $f$ poss\`{e}de un point fixe li\'{e} \`{a} $\mathcal{O}$.
\end{thm}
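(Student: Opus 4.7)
L'approche que je suivrais consiste à appliquer la théorie du point fixe de Nielsen à $f$ vu comme homéomorphisme d'une surface ponctuée. Bien que $f$ permute les points $P_{i}$, il préserve globalement $\Real^{2} \setminus \mathcal{O}$. En prolongeant $f$ à la sphère $S^{2}$ par un point fixe à l'infini, puis en retirant de petits disques ouverts centrés en $\mathcal{O}$ et en $\infty$, on obtient une surface compacte à bord $\Sigma$ sur laquelle $f$ induit (après une isotopie locale) un homéomorphisme. La caractéristique d'Euler de $\Sigma$ vaut $1-n$, strictement négative pour $n \geq 2$, de sorte que le théorème de Nielsen-Wecken s'applique : le nombre de Nielsen $N(f)$ coïncide avec le nombre minimal de points fixes d'un homéomorphisme isotope à $f$ dans $\Sigma$.

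L'étape suivante est de minorer $N(f)$ via un calcul de Lefschetz. On a $H_{0}(\Sigma) = \mathbb{Z}$, sur lequel $f_{*}$ agit trivialement, et $H_{1}(\Sigma) = \mathbb{Z}^{n}$, dont on peut choisir pour base $n$ des $n+1$ composantes de bord (la somme de toutes les composantes de bord étant nulle en homologie) ; $f_{*}$ y opère par la permutation cyclique d'ordre $n$ des composantes associées à $\mathcal{O}$, de trace nulle pour $n \geq 2$. On obtient ainsi $L(f) = 1 - 0 = 1 \neq 0$, et puisque $L(f)$ est la somme des indices sur les classes de Nielsen, il existe au moins une classe essentielle, d'où $N(f) \geq 1$.

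Il reste à faire le lien avec la notion de liaison. L'observation clé est que, si $P_{0} \in \mathrm{Fix}(f)$ n'est pas lié à $\mathcal{O}$, la courbe $\mathcal{C}$ fournie par la définition permet, par extension d'isotopie, de remplacer $f$ (dans sa classe d'isotopie au sein de $\Sigma$) par un homéomorphisme préservant $\mathcal{C}$ ; l'homéomorphisme induit sur la composante extérieure, topologiquement un anneau contenant $P_{0}$ mais disjoint de $\mathcal{O}$, a un nombre de Lefschetz nul, et la théorie de l'annulation des points fixes sur un anneau permet alors d'éliminer $P_{0}$ par une isotopie supplémentaire sans toucher $\mathcal{O}$. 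Si tous les points fixes de $f$ étaient non liés, une application itérée de ce procédé fournirait un homéomorphisme isotope à $f$ dans $\Sigma$ sans aucun point fixe, contredisant $N(f) \geq 1$.

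Le principal obstacle réside précisément dans cette dernière étape de suppression des points fixes non liés. Un point fixe d'indice local non nul ne peut être éliminé à lui seul, mais seulement en l'appariant à un autre point fixe d'indice opposé à l'intérieur de la même composante ; l'argument demande donc un traitement global des points fixes non liés sur l'anneau extérieur et une analyse de leurs classes de Nielsen. C'est ce jeu entre l'invariance du nombre de Nielsen par isotopie et la possibilité d'annulation par paires qui constitue, à mon sens, le cœur technique de la preuve.
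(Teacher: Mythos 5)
Votre sch\'{e}ma comporte une lacune r\'{e}elle, que vous signalez d'ailleurs vous-m\^{e}me : toute la difficult\'{e} est concentr\'{e}e dans l'\'{e}tape de suppression des points fixes non li\'{e}s, et celle-ci n'est ni \'{e}tablie ni facilement \'{e}tablissable. Un nombre de Lefschetz nul sur l'anneau ext\'{e}rieur ne permet pas d'\'{e}liminer les points fixes par isotopie : il faudrait encore que leurs classes de Nielsen y soient inessentielles, alors qu'un hom\'{e}omorphisme de l'anneau peut poss\'{e}der deux classes essentielles d'indices $+1$ et $-1$ ; de plus, le proc\'{e}d\'{e} it\'{e}r\'{e} modifie $f$ par des isotopies ambiantes qui d\'{e}placent l'ensemble des points fixes et la donn\'{e}e de liaison des autres points, sans contr\^{o}le. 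Surtout, la minoration $N(f)\ge 1$ tir\'{e}e de $L(f)=1$ est trop faible : rien n'interdit a priori que l'unique classe essentielle soit form\'{e}e de points fixes non li\'{e}s (ou provienne du bord \`{a} l'infini), auquel cas aucune contradiction n'appara\^{\i}t. Enfin, en retirant aussi un disque autour de $P_{\infty}$ (o\`{u} $\hat{f}$ n'est que continue), vous perdez le point fixe \`{a} l'infini qui sert de r\'{e}f\'{e}rence pour les points non li\'{e}s, et le recours \`{a} un \'{e}nonc\'{e} de type Wecken (minimalit\'{e} du nombre de Nielsen par isotopie sur une surface \`{a} bord) est un r\'{e}sultat d\'{e}licat qui n'est pas justifi\'{e} ici.

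La d\'{e}monstration du papier contourne pr\'{e}cis\'{e}ment cette \'{e}tape d'\'{e}limination. On garde $P_{\infty}$ comme point fixe int\'{e}rieur (seuls les points $P_{i}$, o\`{u} $f$ est $C^{1}$, sont \'{e}clat\'{e}s en cercles de vecteurs unitaires tangents) et le lemme~\ref{lem1} montre que tout point fixe non li\'{e} est Nielsen-\'{e}quivalent \`{a} $P_{\infty}$. On calcule $L(\tau)=2$ et l'ingr\'{e}dient d\'{e}cisif est le th\'{e}or\`{e}me de Nielsen : pour un hom\'{e}omorphisme pr\'{e}servant l'orientation d'une surface de caract\'{e}ristique d'Euler strictement n\'{e}gative, chaque classe de Nielsen est d'indice $\le 1$. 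La somme des indices valant $2$, il existe au moins deux classes essentielles (le cas $n=2$, o\`{u} $M$ est un anneau, se traite \`{a} part par un rev\^{e}tement double), donc un point fixe hors de la classe de $P_{\infty}$, n\'{e}cessairement li\'{e} \`{a} $\mathcal{O}$ d'apr\`{e}s le lemme~\ref{lem1}. C'est cette borne sur les indices des classes --- absente de votre proposition --- qui remplace l'argument de suppression que vous laissez en suspens.
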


\begin{rem}
L'hypoth\`{e}se de diff\'{e}rentiabilit\'{e} est simplement technique et
n'intervient qu'au voisinage des points de l'orbite p\'{e}riodique de
$f$. Une d\'{e}monstration semblable devrait \^{e}tre valable sans cette
hypoth\`{e}se, \textit{voir} \cite[p. 320]{Fat87}.
\end{rem}

\section{Classes de Nielsen}

Nous r\'{e}sumons les \'{e}l\'{e}ments de la th\'{e}orie dont nous avons besoin et
que l'on peut trouver dans \cite{Jia83} et \cite{Nie44}.

Soient $M$ une surface compacte et $\pi : \overline{M}\rightarrow M$
son rev\^{e}tement universel. Consid\'{e}rons un hom\'{e}omorphisme $\tau :
M\rightarrow M$. Deux points fixes de $\tau$, $P_{0}$ et $P_{1}$
sont \emph{Nielsen-\'{e}quivalents} s'il existe un rel\`{e}vement $t$ de
$\tau$ fixant \`{a} la fois un point
$\overline{P}_{0}\in\pi^{-1}(P_{0})$ et un point
$\overline{P}_{1}\in\pi^{-1}(P_{1})$ ou, de fa\c{c}on \'{e}quivalente, s'il
existe un arc $c$ joignant $P_{0}$ \`{a} $P_{1}$ et homotope \`{a} l'arc
$\tau(c)$ relativement \`{a} $P_{0}$, $P_{1}$. Les classes d'\'{e}quivalence
pour cette relation sont les \emph{classes de Nielsen} de $\tau$.
Chaque classe de Nielsen est un sous-ensemble isol\'{e} de
$Fix(\tau)=\set{P\in M;\tau (P)=P}$, on peut donc d\'{e}finir son
indice, qui est un entier relatif, et il existe seulement un nombre
fini de classes ayant un indice non nul. Ce sont les classes
essentielles; leur nombre $N(\tau)$, appel\'{e} nombre de Nielsen de
$\tau$, est un invariant d'homotopie, de sorte que toute application
homotope \`{a} $\tau$ poss\`{e}de au moins $N(\tau)$ points fixes. En
d\'{e}signant ces classes par $F_{1},F_{2},\dotsc ,F_{N}$ et par
$j_{1},j_{2},\dotsc ,j_{N}$ leur indice respectif, on a la relation:
\begin{equation}\label{sum}
    \sum_{k=1}^{N}j_{k}=L(\tau)
\end{equation}
o\`{u}
\begin{equation*}
    L(\tau) = \sum_{i=0}^{2}(-1)^{i} \;\mathrm{tr}\,(\tau_{*i} : H_{i}(M;\mathbb{Q})\rightarrow H_{i}(M;\mathbb{Q}))
\end{equation*}
est le nombre de Lefschetz de $\tau$.

Dans le cas o\`{u} $\chi(M)<0$ ($\chi(M)$ \'{e}tant la caract\'{e}ristique
d'Euler de $M$) et $\tau$ un hom\'{e}omorphisme qui pr\'{e}serve
1'orientation, Nielsen a montr\'{e} que $j_{k}\le 1$ ($k=1, \dotsc,
N$), voir \cite[pp. 17-19]{Nie44}, et c'est l\`{a} le r\'{e}sultat essentiel
dont nous aurons besoin par la suite.

\section{Preuve du th\'{e}or\`{e}me}

Pour commencer, nous identifions $\Real^{2}$ au compl\'{e}mentaire du
p\^{o}le Nord $P_{\infty}$ de la sph\`{e}re unit\'{e} $S^{2}$ gr\^{a}ce \`{a} la
projection st\'{e}r\'{e}ographique. $f$ s'\'{e}tend alors en un hom\'{e}omorphisme
$\hat{f}$ de $S^{2}$ qui fixe $P_{\infty}$.

Soient $\set{P_{1}, P_{2}, \dotsc, P_{n}}$ l'ensemble des points de
l'orbite p\'{e}riodique $\mathcal{O}$ de $f$ et $M$ la surface obtenue
en compactifiant $S^{2}\setminus\{P_{1}, P_{2}, \dotsc, P_{n}\}$ de
la fa\c{c}on suivante. On remplace chaque $P_{i}$ ($i=1, \dotsc, n$) par
un cercle $S_{i}$, le cercle des vecteurs unitaires tangents en ce
point. $\hat{f}$ \'{e}tant de classe $C^{1}$ au voisinage des points
$P_{1},P_{2},\dotsc ,P_{n}$, la restriction de $\hat{f}$ \`{a}
$S^{2}\setminus\set{P_{1}, P_{2}, \dotsc, P_{n}}$ induit un
hom\'{e}omorphisme $\tau : M\rightarrow M$ qui co\"{\i}ncide avec
$\hat{f_{/S^{2}\setminus\set{P_{1},P_{2},\dotsc,P_{n}}}}$ en dehors
du bord de $M$ (\textit{voir} \cite{Bow78}).

Remarquons que $\hat{f}$ et $\tau$ ont les m\^{e}mes points fixes et que
les points fixes de $\hat{f}$, en dehors de $P_{\infty}$, sont
exactement les points fixes de $f$.

\begin{lem}\label{lem1}
Soit $P_{0}$ un point fixe de $f$ non li\'{e} \`{a} $\mathcal{O}$. Alors
$P_{0}$ et $P_{\infty}$ sont dans la m\^{e}me classe de Nielsen de
$\tau$.
\end{lem}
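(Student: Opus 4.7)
Le plan est de convertir l'hypoth\`{e}se de non-liaison en la construction explicite d'un arc $c$ joignant $P_0$ \`{a} $P_\infty$ et tel que $\tau(c)$ soit homotope \`{a} $c$ \`{a} extr\'{e}mit\'{e}s fix\'{e}es dans $M$. L'observation de d\'{e}part est que la courbe de Jordan $\mathcal{C}$ fournie par la d\'{e}finition de la non-liaison borde un disque $D$ contenant $\mathcal{O}$ et \'{e}vitant $P_0$, donc aussi $P_\infty$; le disque compl\'{e}mentaire $D' = S^{2}\setminus \mathrm{Int}(D)$, de bord $\mathcal{C}$, contient ainsi les deux points fixes $P_0$ et $P_\infty$ dans son int\'{e}rieur.

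Premi\`{e}re \'{e}tape : redresser $\hat f$ sur $\mathcal{C}$. Puisque $\mathcal{C}$ et $\hat f(\mathcal{C})$ sont isotopes dans $S^{2}\setminus\set{P_\infty, P_0, P_1, \dotsc, P_n}$, le th\'{e}or\`{e}me d'extension des isotopies sur $S^2$ procure une isotopie ambiante $(H_t)_{t\in[0,1]}$, avec $H_0 = \mathrm{id}$, fixant chacun de ces $n+2$ points marqu\'{e}s et telle que $H_1(\mathcal{C}) = \hat f(\mathcal{C})$. L'hom\'{e}omorphisme $g := H_1^{-1}\circ \hat f$ a alors les m\^{e}mes points fixes que $\hat f$, pr\'{e}serve l'orientation, et v\'{e}rifie $g(\mathcal{C}) = \mathcal{C}$. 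Comme il fixe $P_\infty \in \mathrm{Int}(D')$, il pr\'{e}serve chaque composante de $S^{2}\setminus\mathcal{C}$, donc $g(D') = D'$.

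Seconde \'{e}tape : comparer $c$ et $\hat f(c)$. On choisit un arc $c$ reliant $P_0$ \`{a} $P_\infty$ dans $\mathrm{Int}(D')$. Les arcs $c$ et $g(c)$ ont m\^{e}mes extr\'{e}mit\'{e}s et sont tous deux contenus dans le disque $D'$, donc homotopes \`{a} extr\'{e}mit\'{e}s fix\'{e}es dans $D'$, en particulier dans $S^{2}\setminus\set{P_1, \dotsc, P_n}$. Par ailleurs, $H_t$ fixe $P_0$, $P_\infty$ et chaque $P_i$ pour tout $t$; et comme $g(c)\subset D'$ \'{e}vite les $P_i$, l'application $(s,t) \mapsto H_t(g(c(s)))$ fournit une homotopie \`{a} extr\'{e}mit\'{e}s fix\'{e}es entre $g(c)$ et $\hat f(c) = H_1(g(c))$, \`{a} valeurs dans $S^{2}\setminus\set{P_1, \dotsc, P_n}$. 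En composant, $c \simeq \hat f(c)$ dans $S^{2}\setminus\set{P_1, \dotsc, P_n}$, donc $c \simeq \tau(c)$ dans $M$ : les points $P_0$ et $P_\infty$ appartiennent \`{a} la m\^{e}me classe de Nielsen de $\tau$. Le seul point d\'{e}licat est l'invocation du th\'{e}or\`{e}me d'extension des isotopies fixant simultan\'{e}ment tous les points marqu\'{e}s, r\'{e}sultat classique sur les surfaces mais qui m\'{e}rite d'\^{e}tre invoqu\'{e} avec soin.
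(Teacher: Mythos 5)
Votre d\'{e}monstration est correcte, mais elle emprunte une route r\'{e}ellement diff\'{e}rente de celle de l'article. L'article v\'{e}rifie le crit\`{e}re de Nielsen \emph{par rel\`{e}vement commun} : il rel\`{e}ve dans le rev\^{e}tement universel $\overline{M}$ le disque $\Delta$ bord\'{e} par $\mathcal{C}$ et l'isotopie de courbes $\mathcal{C}_{s}$, choisit l'unique rel\`{e}vement $t$ de $\tau$ envoyant $\overline{\Delta}$ sur $\overline{\Delta}^{\prime}$, et montre que $t$ fixe \`{a} la fois $\overline{P}_{0}$ et $\overline{P}_{\infty}$, l'argument clef \'{e}tant que ces points ne peuvent traverser les courbes $\overline{\mathcal{C}}_{s}$; le seul ingr\'{e}dient topologique est le rel\`{e}vement d'une homotopie. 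Vous, au contraire, restez dans $S^{2}$ et v\'{e}rifiez le crit\`{e}re \'{e}quivalent \emph{par arc} : vous redressez $\hat f$ le long de $\mathcal{C}$ gr\^{a}ce au th\'{e}or\`{e}me d'extension des isotopies (isotopie ambiante \`{a} support compact dans $S^{2}\setminus\set{P_{0},P_{\infty},P_{1},\dotsc,P_{n}}$, prolong\'{e}e par l'identit\'{e}), puis comparez $c$, $g(c)$ et $\hat f(c)$ dans le disque compl\'{e}mentaire $D^{\prime}$ et le long de l'isotopie ambiante; le passage final \`{a} $M$ est licite puisque l'arc et les homotopies restent dans l'int\'{e}rieur de $M$, o\`{u} $\tau$ co\"{\i}ncide avec $\hat f$. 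Votre variante est conceptuellement plus \'{e}l\'{e}mentaire (ni rev\^{e}tement universel, ni choix de rel\`{e}vement), mais son prix est l'ingr\'{e}dient nettement plus fort qu'est l'extension ambiante, fixant les points marqu\'{e}s, d'une isotopie de courbes en cat\'{e}gorie topologique --- r\'{e}sultat classique en dimension $2$, que vous signalez d'ailleurs vous-m\^{e}me, alors que l'article n'utilise que la propri\'{e}t\'{e} de rel\`{e}vement des homotopies. Deux retouches mineures : l'affirmation que $g=H_{1}^{-1}\circ\hat f$ a \emph{les m\^{e}mes points fixes} que $\hat f$ est fausse en g\'{e}n\'{e}ral (les points fixes de $g$ sont les solutions de $\hat f(x)=H_{1}(x)$); heureusement vous n'utilisez que le fait que $g$ fixe $P_{0}$ et $P_{\infty}$, ce qui est vrai parce que $H_{1}$ fixe les points marqu\'{e}s --- reformulez donc ainsi. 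De m\^{e}me, la pr\'{e}servation de l'orientation par $g$ ne sert \`{a} rien : $g(\mathcal{C})=\mathcal{C}$ et $g(P_{\infty})=P_{\infty}$ suffisent pour conclure $g(D^{\prime})=D^{\prime}$.
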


\begin{proof}
$P_{0}$ n'\'{e}tant pas li\'{e} \`{a} $\mathcal{O}$, il existe une courbe de
Jordan $\mathcal{C}$ dans $S^{2}$, s\'{e}parant $\set{P_{0},
P_{\infty}}$ de $\set{P_{1}, P_{2}, \dotsc, P_{n}}$ et telle que
$\hat{f}(\mathcal{C})$ soit isotope \`{a} $\mathcal{C}$ dans
$S^{2}\setminus\set{P_{0}, P_{\infty}, P_{1}, P_{2}, \dotsc,
P_{n}}$.

Cette courbe de Jordan borde, dans $M$, un disque $\Delta$ qui
contient $P_{0}$ et $P_{\infty}$. De m\^{e}me
$\mathcal{C}^{\prime}=\tau(\mathcal{C})$ borde un disque
$\Delta^{\prime}=\tau(\Delta)$ qui contient $\tau(P_{0})=P_{0}$ et
$\tau(P_{\infty})=P_{\infty}$.

$\mathcal{C}$ se rel\`{e}ve donc dans $\overline{M}$, le rev\^{e}tement
universel de $M$, en une courbe ferm\'{e}e $\overline{\mathcal{C}}$
bordant un disque $\overline{\Delta}$ tel que
$\pi(\overline{\Delta})=\Delta$. Par ailleurs, une isotopie
$\mathcal{C}_{s}$ ($s\in[0,1]$) entre $\mathcal{C}$ et
$\mathcal{C}^{\prime}$ se rel\`{e}ve en une isotopie
$\overline{\mathcal{C}}_{s}$ entre $\overline{\mathcal{C}}$ et un
rel\`{e}vement $\overline{\mathcal{C}}^{\prime}$ de
$\mathcal{C}^{\prime}$ bordant un disque
$\overline{\Delta}^{\prime}$ tel que $\pi$
$(\overline{\Delta}^{\prime})=\Delta^{\prime}$.

Soit $t$ l'unique rel\`{e}vement de $\tau$ tel que
$t(\mathrm{\Delta})=\overline{\Delta}^{\prime}$. $\overline{\Delta}$
(resp. $\overline{\Delta}^{\prime}$) contient un unique point
$\overline{P}_{\infty}\in\pi^{-1}(P_{\infty})$ [resp.
$\overline{P}_{\infty}^{\prime}\in\pi^{-1}(P_{\infty})$] et l'on a,
par construction,
$t(\overline{P}_{\infty})=\overline{P}_{\infty}^{\prime}$. Or
$\overline{P}_{\infty}\in\overline{\Delta}^{\prime}$ : sinon il
existerait $s\in[0,1]$ tel que
$\overline{P}_{\infty}\in\overline{\mathcal{C}}_{s}$, ce qui est exclu
puisque l'isotopie $\mathcal{C}_{s}$ \'{e}vite $P_{\infty}$ par
hypoth\`{e}se. Donc $\overline{P}_{\infty}^{\prime} =
\overline{P}_{\infty} = t(\overline{P}_{\infty})$.

Le m\^{e}me argument valant pour $\overline{P}_{0} =
\pi^{-1}(P_{0})\cap\overline{\Delta}$, il en r\'{e}sulte que
$t(\overline{P}_{0})=\overline{P}_{0}$. Donc $P_{0}$ et $P_{\infty}$
sont dans la m\^{e}me classe de Nielsen de $\tau$.
\end{proof}

\begin{lem}\label{lem2}
$N(\tau)\ge 2$.
\end{lem}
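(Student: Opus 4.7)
Le plan consiste \`{a} calculer le nombre de Lefschetz $L(\tau)$ puis \`{a} en d\'{e}duire $N(\tau)\geq 2$ \`{a} l'aide de l'in\'{e}galit\'{e} $j_{k}\leq 1$ rappel\'{e}e en fin de Section~2.

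Je commencerai par identifier le type d'homotopie de $M$. Comme $M$ est obtenue en \'{e}clatant $n$ points de $S^{2}$, elle a le m\^{e}me type d'homotopie que $S^{2}\setminus\set{P_{1},\dotsc,P_{n}}$, \`{a} savoir un bouquet de $n-1$ cercles. D'o\`{u} $H_{0}(M;\mathbb{Q})\cong\mathbb{Q}$, $H_{1}(M;\mathbb{Q})\cong\mathbb{Q}^{n-1}$ et $H_{2}(M;\mathbb{Q})=0$.

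Je calculerai ensuite l'action de $\tau_{*}$. Les traces sur $H_{0}$ et $H_{2}$ valent respectivement $1$ et $0$. Pour $H_{1}$, je prendrai pour g\'{e}n\'{e}rateurs de $\mathbb{Q}^{n}$ les petits lacets $\gamma_{1},\dotsc,\gamma_{n}$ orient\'{e}s entourant les $P_{i}$, soumis \`{a} la seule relation $\gamma_{1}+\dotsb+\gamma_{n}=0$ (leur somme borde le disque compl\'{e}mentaire dans $S^{2}$). L'orbite \'{e}tant de p\'{e}riode $n$ et $f$ pr\'{e}servant l'orientation, $\tau_{*}$ envoie $\gamma_{i}$ sur $\gamma_{\sigma(i)}$, o\`{u} $\sigma$ est la permutation cyclique induite par $f$. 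La matrice de permutation cyclique sur $\mathbb{Q}^{n}$ a trace nulle; le vecteur $(1,\dotsc,1)$ \'{e}tant fixe, la trace induite sur le quotient $H_{1}(M;\mathbb{Q})$ vaut $-1$. Ainsi $L(\tau)=1-(-1)+0=2$.

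Pour conclure, si $n\geq 3$, alors $\chi(M)=2-n<0$, et comme $\tau$ pr\'{e}serve l'orientation, chaque indice de classe essentielle satisfait $j_{k}\leq 1$; la relation~(\ref{sum}) interdit alors $N(\tau)=1$ (sinon on aurait $j_{1}=2>1$), d'o\`{u} $N(\tau)\geq 2$.

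La principale difficult\'{e} sera de justifier rigoureusement l'action par permutation cyclique de $\tau_{*}$ sur les $\gamma_{i}$ (avec les bonnes orientations), ce qui utilise de fa\c{c}on essentielle la pr\'{e}servation de l'orientation et le fait que $\tau$ est bien d\'{e}fini sur les cercles de bord gr\^{a}ce \`{a} l'hypoth\`{e}se $C^{1}$ sur $f$ aux points $P_{i}$; il faudra \'{e}galement traiter s\'{e}par\'{e}ment le cas exceptionnel $n=2$, o\`{u} $\chi(M)=0$ sort du cadre de l'in\'{e}galit\'{e} de Nielsen.
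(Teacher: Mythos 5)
Pour $n\ge 3$, votre argument co\"{\i}ncide avec celui du papier : votre calcul de $L(\tau)$ (r\'{e}traction de $M$ sur un bouquet de $n-1$ cercles, action de $\tau_{*1}$ par permutation cyclique des lacets $\gamma_{i}$ modulo la relation $\gamma_{1}+\dotsb+\gamma_{n}=0$, d'o\`{u} $\mathrm{tr}(\tau_{*1})=-1$) est correct et ne fait qu'expliciter l'\'{e}galit\'{e} $L(\tau)=1-\mathrm{tr}(\tau_{*1})=2$ que le texte \'{e}nonce sans d\'{e}tail ; la conclusion via $\chi(M)=2-n<0$, l'in\'{e}galit\'{e} de Nielsen $j_{k}\le 1$ et la relation \eqref{sum} est exactement celle du texte.

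Le point faible est le cas $n=2$ : vous le signalez comme restant \`{a} traiter s\'{e}par\'{e}ment, mais vous ne donnez aucun argument, de sorte que votre preuve du lemme est incompl\`{e}te pour les orbites de p\'{e}riode $2$. Ce cas n'est pas anodin : $M$ est alors un anneau, $\chi(M)=0$, l'in\'{e}galit\'{e} $j_{k}\le 1$ ne s'applique plus, et la seule relation \eqref{sum} avec $L(\tau)=2$ n'exclut pas a priori une unique classe essentielle d'indice $2$, c'est-\`{a}-dire $N(\tau)=1$. Le papier comble pr\'{e}cis\'{e}ment ce trou par un argument suppl\'{e}mentaire : $\tau$ \'{e}change les deux composantes de bord de l'anneau et, en passant \`{a} un rev\^{e}tement \`{a} deux feuillets, un argument classique (\textit{voir} \cite{Boy89} et \cite{Fat87}) montre que $\tau$ poss\`{e}de exactement deux classes de Nielsen, chacune d'indice $1$. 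Il vous faut donc reproduire cet argument, ou en fournir un substitut, pour que l'in\'{e}galit\'{e} $N(\tau)\ge 2$ soit \'{e}tablie dans tous les cas.
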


\begin{proof}
Remarquons que $L(\tau) = 1 - \mathrm{tr}(\tau_{*1})=2$. Ceci \'{e}tant, nous
distinguons deux cas suivant que la p\'{e}riode $n$ de $\mathcal{O}$ est
\'{e}gale ou strictement sup\'{e}rieure \`{a} $2$.

Si $n=2$, $M$ est un anneau dont $\tau$ permute les bords. En
consid\'{e}rant un rev\^{e}tement \`{a} deux feuillets, on peut montrer par un
argument classique, \textit{voir} \cite{Boy89} et \cite{Fat87}, que
$\tau$ poss\`{e}de exactement deux classes de Nielsen, chacune ayant un
indice \'{e}gal \`{a} $1$.

Si $n\ge 3$, $\chi(M)<0$ et d'apr\`{e}s la remarque faite \`{a} la fin du
paragraphe pr\'{e}c\'{e}dent et la relation \eqref{sum}, on a n\'{e}cessairement
$N(\tau)\ge 2$.
\end{proof}

\begin{proof}[Preuve du Th\'{e}or\`{e}me~\ref{main}]
D'apr\`{e}s le lemme~\ref{lem2}, $\tau$ poss\`{e}de au moins deux points fixes dont
un au moins n'est pas dans la m\^{e}me classe que $P_{\infty}$. Il
r\'{e}sulte alors du lemme~\ref{lem1} que ce point est un point
fixe de $f$ li\'{e} \`{a} $\mathcal{O}$.
\end{proof}

% References ------------------------------------------------------

\begin{bibdiv}
\begin{biblist}

\bib{Bon06}{article}{
      author={Bonino, M.},
       title={{N}ielsen theory and linked periodic orbits of homeomorphisms of
  $\mathbb{S}^{2}$},
        date={2006},
     journal={Math. Proc. Camb. Phil. Soc.},
      volume={140},
       pages={425\ndash 430},
}

\bib{Bow78}{incollection}{
      author={Bowen, R.},
       title={Entropy and the fundamental group},
        date={1978},
   booktitle={The structure of attractors in dynamical systems (proc. conf.,
  north dakota state univ., fargo, n.d., 1977)},
      series={Lecture Notes in Math.},
      volume={668},
   publisher={Springer},
     address={Berlin},
       pages={21\ndash 29},
      review={\MR{80d:58049}},
}

\bib{Boy89}{unpublished}{
      author={Boyland, P.},
       title={Notes on dynamics of surface homeomorphisms: lectures by p.
  boyland and j. franks},
        date={1989},
        note={notes by C. Carroll, J. Guaschi and T. Hall, August 1989,
  Warwick, pp. 1--48},
}

\bib{Fat87}{article}{
      author={Fathi, Albert},
       title={An orbit closing proof of {B}rouwer's lemma on translation arcs},
        date={1987},
        ISSN={0013-8584},
     journal={Enseign. Math. (2)},
      volume={33},
      number={3-4},
       pages={315\ndash 322},
      review={\MR{MR925994 (89d:55004)}},
}

\bib{Gam90}{article}{
      author={Gambaudo, J.-M.},
       title={Periodic orbits and fixed points of a {$C\sp 1$}
  orientation-preserving embedding of {$D\sp 2$}},
        date={1990},
        ISSN={0305-0041},
     journal={Math. Proc. Cambridge Philos. Soc.},
      volume={108},
      number={2},
       pages={307\ndash 310},
      review={\MR{MR1074717 (91h:57008)}},
}

\bib{Jia83}{book}{
      author={Jiang, B.~J.},
       title={Lectures on {N}ielsen fixed point theory},
      series={Contemporary Mathematics},
   publisher={American Mathematical Society},
     address={Providence, R.I.},
        date={1983},
      volume={14},
        ISBN={0-8218-5014-8},
      review={\MR{MR685755 (84f:55002)}},
}

\bib{Nie44}{article}{
      author={Nielsen, J.},
       title={Surface transformation classes of algebraically finite type},
        date={1944},
     journal={Danske Vid. Selsk. Math.-Phys. Medd.},
      volume={21},
      number={2},
       pages={89},
      review={\MR{MR0015791 (7,469c)}},
}

\end{biblist}
\end{bibdiv}

\end{document}